\documentclass[11pt]{article}%
\usepackage{amsfonts}
\usepackage{amsmath}
\usepackage{hyperref}
\usepackage{amssymb}
\usepackage{graphicx}%
\setcounter{MaxMatrixCols}{30}
\providecommand{\U}[1]{\protect\rule{.1in}{.1in}}
\textwidth 170mm \textheight 210mm \oddsidemargin -2mm
\evensidemargin -2mm \topmargin -10mm
\newtheorem{theorem}{Theorem}

\newtheorem{remark}[theorem]{Remark}

\newenvironment{proof}[1][Proof]{\noindent\textbf{#1.} }{\ \rule{0.5em}{0.5em}}
\begin{document}

\title{{\large \textbf{A procedure for the change point problem in parametric models
based on }}$\phi$-{\large \textbf{divergence test-statistics}}}
\author{Batsidis, A.$^{1}$, Mart\'{\i}n, N.$^{2}$\thanks{Corresponding author, E-mail:
\href{mailto: nirian.martin@uc3m.es}{nirian.martin@uc3m.es}.}, Pardo, L.$^{3}$
and Zografos, K.$^{1}$\\$^{1}${\small Dept. Mathematics, University of Ioannina, Greece}\\$^{2}${\small Dept. Statistics, Carlos III University of Madrid, Spain}\\$^{3}${\small Dept. Statistics and O.R., Complutense University of Madrid,
Spain}}
\maketitle

\begin{abstract}
This paper studies the change point problem for a general parametric,
univariate or multivariate family of distributions. An information theoretic
procedure is developed which is based on general divergence measures for
testing the hypothesis of the existence of a change. For comparing the
accuracy of the new test-statistic a simulation study is performed for the
special case of a univariate discrete model. Finally, the procedure proposed
in this paper is illustrated through a classical change-point example.

\end{abstract}

\noindent\textbf{MSC}{\small : \emph{primary} 62F03; 62F05; \emph{secondary}
62H15}

\medskip

\noindent\textbf{Keywords}{\small :} {\small Change point; Information
criterion; Divergence; Wald test-statistic; General distributions.}

\section{Introduction\label{sec0}}

The change point problem has been considered and studied by several authors
the last five decades. Change point analysis is a statistical tool for
determining whether a change has taken place at a point of a sequence of
observations, such that the observations are described by one distribution up
to that point and by another distribution after that point. Change-point
analysis concerns with the detection and estimation of the point at which the
distribution changes. One change point problem or multiple change points
problem have been studied in the literature, depending on whether one or more
change points are observed in a sequence of random variables. Several methods,
parametric or non-parametric, have been developed to approach the solution of
this problem while the range of applications of change point analysis is
broad. Applications can be encountered in many areas such as statistical
quality control, public health, medicine, finance, biomedical signal
processing, meteorology, seismology, etc. The monograph by Chen and Gupta
(2000)\ summarizes recent developments in parametric change-point analysis.

Typical situations encountered in the literature of parametric multiple change
points analysis are as follows: Let $\boldsymbol{X}_{1},\boldsymbol{X}%
_{2},...,\boldsymbol{X}_{K}$ be $K$ independent $d$-variate observations
($d\in%
\mathbb{N}
$) and let $(\mathcal{X}^{(d)},\beta_{\mathcal{X}},P_{\boldsymbol{\theta}%
})_{\boldsymbol{\theta}\in\Theta}$ the statistical space associated with the
random variable (r.v.) $\boldsymbol{X}_{i}$, $i=1,...,K$. The probability
density function with respect to a $\sigma$-finite measure $\mu$ given by
$f_{\boldsymbol{\theta}_{i}}(\boldsymbol{x})=f(\boldsymbol{x}%
,\boldsymbol{\theta}_{i})=\frac{dP_{\boldsymbol{\theta}_{i}}}{d\mu}$,
$\boldsymbol{\theta}_{i}\in%
\mathbb{R}
^{m}$, $i=1,...,K$, $\boldsymbol{x\in%
\mathbb{R}
}^{d}$. For simplicity, $\mu$ is either the Lebesgue measure or a counting
measure. We adopt in the sequel the formulation of the multiple change point
problem as it appeared in Srivastava and Worsley (1986) and Chen and Gupta
(2000, 2004). Based on these authors, suppose that adjacent observations are
grouped in $q$ groups, so that $\boldsymbol{X}_{1},\boldsymbol{X}%
_{2},...,\boldsymbol{X}_{k_{1}}$, are in the first group, $\boldsymbol{X}%
_{k_{1}+1},\boldsymbol{X}_{k_{1}+2},...,\boldsymbol{X}_{k_{2}}$, are in the
second group and we continue in a similar manner until $\boldsymbol{X}%
_{k_{q-1}+1},\boldsymbol{X}_{k_{q-1}+2},...,\boldsymbol{X}_{k_{q}%
}=\boldsymbol{X}_{K}$ are in the $q$-th group.

Consider the model for changes in the parameters. This is formulated as a
problem of testing the following hypotheses,%
\begin{equation}
H_{0}:\text{ }\boldsymbol{\theta}_{1}=\boldsymbol{\theta}_{2}%
=...=\boldsymbol{\theta}_{K}\text{ (}=\boldsymbol{\theta}_{0}\text{,
}\boldsymbol{\theta}_{0}\text{ unknown),} \label{e1}%
\end{equation}
versus the alternative%
\[
H_{1}:\text{ }\boldsymbol{\theta}_{1}=...=\boldsymbol{\theta}_{k_{1}}%
\neq\boldsymbol{\theta}_{k_{1}+1}=...=\boldsymbol{\theta}_{k_{2}}\neq
...\neq\boldsymbol{\theta}_{k_{q-1}+1}=...=\boldsymbol{\theta}_{k_{q}%
}=\boldsymbol{\theta}_{K},
\]
where $q$, $1\leq q\leq K$, is the unknown number of changes and $k_{1}%
,k_{2},...,k_{q}$ are the unknown positions of the change points. The above
hypotheses can be equivalently stated in the form%
\begin{equation}
H_{0}:\boldsymbol{X}_{i}\text{ are described by }f_{\boldsymbol{\theta}_{0}%
}\text{, }i=1,...,K\text{ and }\boldsymbol{\theta}_{0}\text{ unknown,}
\label{e2}%
\end{equation}
versus the alternative%
\[
H_{1}:\boldsymbol{X}_{k_{j}+1},\boldsymbol{X}_{k_{j}+2},...,\boldsymbol{X}%
_{k_{j+1}},\text{ }j=0,...,q-1\text{ are described by }f_{\boldsymbol{\theta
}_{j+1}}\text{,}%
\]
with $\boldsymbol{X}_{k_{q}}=\boldsymbol{X}_{K}$.

There is an extensive bibliography on the subject and several methods to
search for the change point problem have appeared in the literature. Among
them, the generalized likelihood ratio test, Bayesian solution of the problem,
information criterion approaches, cumulative sum method, etc. Based on these
methods, several papers discuss the change-point problems in specific
probabilistic models, like the univariate and multivariate normal
distribution, the gamma model and the exponential model. For instance, Sen and
Srivastava (1980) focused on the single change-point problem. Moreover, they
consider that within each section, the distributions are the same, while the
distribution in a section is different from that in the preceding and the
following section in mean vector or covariance matrix. For an exposition of
these methods and their application to specific distributions we refer to the
monograph or the survey paper by Chen and Gupta (2000, 2001) and the
references appeared therein.

It has been proposed in these and other treatments (cf., for instance,
Vostrikova (1981)), that in order to study the multiple change point problem,
which is formulated by (\ref{e1}) or (\ref{e2}), we just need to test the
single change point hypothesis and then to repeat the procedure for each
subsequence. Hence, we turn to the testing of (\ref{e2}) against the
alternative,%
\begin{equation}
H_{1}:\text{ }\boldsymbol{X}_{i}\equiv f_{\boldsymbol{\theta}_{0}}\text{,
}i=1,...,\kappa\text{ \ and \ }\boldsymbol{X}_{i}\equiv f_{\boldsymbol{\theta
}_{1}}\text{, }i=\kappa+1,...,K, \label{e3}%
\end{equation}
where the symbol $\equiv$\ is used to denote that the observations on the left
follow the parametric density on the right. In (\ref{e3}), $\kappa$ represents
the position a single change point, which is supposed to be unknown. A general
description of this technique in the detection of the changes is summarized in
the following steps by Chen and Gupta (2001). First we test for no change
point versus one change point, that is, we test the null hypothesis given by
(\ref{e2}) versus the alternative given by (\ref{e3}) and equivalently stated
by $H_{1}$: $\boldsymbol{\theta}_{1}=...=\boldsymbol{\theta}_{\kappa}%
\neq\boldsymbol{\theta}_{\kappa+1}=...=\boldsymbol{\theta}_{K}$. Here,
$\kappa$ is the unknown location of the single change point. If $H_{0}$ is not
rejected, then the procedure is finished and there is no change point. If
$H_{0}$ is rejected, then there is a change point and we continue with the
step 2. In the second step we test separately the two subsequences before and
after the change point found in the first step for a change. In the sequel, we
repeat these two steps until no further subsequences have change points. At
the end of the procedure, the collection of change point locations found by
the previous steps constitute the set of the change points.

The subject of change point analysis is twofold. On the one hand to detect if
there is one or more changes in a sequence of observation. The second aspect
of change point analysis is the estimation of the number of changes and their
corresponding locations. In this paper we will develop an information
theoretic procedure which is based on divergence, in order to study the change
point problem. The measures background is a general parametric, univariate or
multivariate family of distributions. We describe formally the framework and
the problem in Section \ref{sec1}, and the main results are presented in
Section \ref{sec2}. In Section \ref{sec3} we focus our interest on a specific
distribution, the binomial distribution and a simulation study is performed in
order to compare the accuracy the new test-statistic with some pre-existing
test-statistics. In the final Section \ref{sec4}, the general results of this
paper are illustrated by means of the well-known Lisdisfarne scribes data set.

\section{Information theoretic procedure\label{sec1}}

Consider now the single change point problem, that is the problem of testing
the pair of hypotheses
\begin{subequations}
\begin{align}
H_{0}  &  :\text{ }\boldsymbol{X}_{i}\equiv f_{\boldsymbol{\theta}_{0}}\text{,
}i=1,...,K\label{e4}\\
H_{1}  &  :\text{ }\boldsymbol{X}_{i}\equiv f_{\boldsymbol{\theta}_{0}}\text{,
}i=1,...,\kappa\text{ and\ }\boldsymbol{X}_{i}\equiv f_{\boldsymbol{\theta
}_{1}}\text{, }i=\kappa+1,...,K, \label{eq4b}%
\end{align}
which are presented by (\ref{e2}) and (\ref{e3}), respectively. In the above
formulation, $\boldsymbol{\theta}_{0}$ and $\boldsymbol{\theta}_{1}$ are
unknown. Since $\kappa$ is the unknown location of the single change point, we
will consider all the candidate points $k\in\{1,...,K-1\}$. Let
$\widehat{\boldsymbol{\theta}}_{0,k}^{(K)}$ denotes the maximum likelihood
estimator (MLE) of $\boldsymbol{\theta}_{0}$ which is based on the random
sample $\boldsymbol{X}_{1},...,\boldsymbol{X}_{k}$ from $f_{\boldsymbol{\theta
}_{0}}$ and let $\widehat{\boldsymbol{\theta}}_{1,k}^{(K)}$ denotes the m.l.e.
of $\boldsymbol{\theta}_{1}$ which is based on the random sample
$\boldsymbol{X}_{k+1},...,\boldsymbol{X}_{K}$ from $f_{\boldsymbol{\theta}%
_{1}}$. If the hypothesis $H_{1}$ is true, then there is a difference between
the probabilistic models $f_{\widehat{\boldsymbol{\theta}}_{0,k}^{(K)}}$ and
$f_{\widehat{\boldsymbol{\theta}}_{1,k}^{(K)}}$, which cause a large value for
a measure of the distance between $f_{\widehat{\boldsymbol{\theta}}%
_{0,k}^{(K)}}$ and $f_{\widehat{\boldsymbol{\theta}}_{1,k}^{(K)}}$. Given that
the $\phi$-divergence is a broad family of distance measures between
probability distributions, the $\phi$-divergence between
$f_{\widehat{\boldsymbol{\theta}}_{0,k}^{(K)}}$ and
$f_{\widehat{\boldsymbol{\theta}}_{1,k}^{(K)}}$ is large if $H_{1}$ is true
and hence it can be used in order to decide if the candidate point $k$ in
(\ref{eq4b}) is a change point ($\kappa=k$). Taking into account that the
m.l.e. $\widehat{\boldsymbol{\theta}}_{0,k}^{(K)}$ and
$\widehat{\boldsymbol{\theta}}_{1,k}^{(K)}$ of $\theta_{0}$ and $\theta_{1}$,
respectively, depend on the candidate change point $k$, we will adopt the
following notation for the $\phi$-divergence between
$f_{\widehat{\boldsymbol{\theta}}_{0,k}^{(K)}}$ and
$f_{\widehat{\boldsymbol{\theta}}_{1,k}^{(K)}}$,
\end{subequations}
\begin{equation}
D_{\phi}^{(k)}=D_{\phi}^{(k)}\left(  f_{\widehat{\boldsymbol{\theta}}%
_{0,k}^{(K)}},f_{\widehat{\boldsymbol{\theta}}_{1,k}^{(K)}}\right)
=\int_{\mathcal{X}^{(d)}}f_{\widehat{\boldsymbol{\theta}}_{1,k}^{(K)}%
}(\boldsymbol{x})\phi\left(  \frac{f_{\widehat{\boldsymbol{\theta}}%
_{0,k}^{(K)}}(\boldsymbol{x})}{f_{\widehat{\boldsymbol{\theta}}_{1,k}^{(K)}%
}(\boldsymbol{x})}\right)  d\mu(\boldsymbol{x}), \label{e5}%
\end{equation}
provided that the convex function $\phi$ satisfies some additional conditions
(see page 408 in Pardo (2006)) which ensure the existence of the above
integral. Moreover, we consider convex functions $\phi$ which satisfy
$\phi(1)=0$ and $\phi^{\prime\prime}(1)\neq0$. Large values of $D_{\phi}%
^{(k)}$ support the existence of a change point and therefore large values of
$D_{\phi}^{(k)}$ suggest rejection of the null hypothesis $H_{0}$. Hence
$D_{\phi}^{(k)}$ can be used as a test statistic for testing the hypotheses
(\ref{e4}). Then, motivated by the fact that large values of $D_{\phi}^{(k)} $
are in favor of $H_{1}$, a test for testing the existence of a single change
point, that is the hypotheses (\ref{e4}), should be based on the $\phi
$-divergence test statistic,
\begin{equation}
T_{\phi}^{(K)}=\max_{k\in\{1,...,K-1\}}T_{\phi}^{(K)}(k), \label{e6}%
\end{equation}
where%
\begin{equation}
T_{\phi}^{(K)}(k)=\frac{k(K-k)}{K}\frac{2}{\phi^{\prime\prime}(1)}D_{\phi
}^{(k)}\left(  f_{\widehat{\boldsymbol{\theta}}_{0,k}^{(K)}}%
,f_{\widehat{\boldsymbol{\theta}}_{1,k}^{(K)}}\right)  . \label{e7}%
\end{equation}
Moreover, the unknown position of the change point $\kappa$ is estimated by
$\widehat{\kappa}_{\phi}$ such that%
\begin{equation}
\widehat{\kappa}_{\phi}=\underset{k\in\{1,...,K-1\}}{\arg\max}T_{\phi}%
^{(K)}(k)=\underset{k\in\{1,...,K-1\}}{\arg\max}\frac{k(K-k)}{K}D_{\phi}%
^{(k)}\left(  f_{\widehat{\boldsymbol{\theta}}_{0,k}^{(K)}}%
,f_{\widehat{\boldsymbol{\theta}}_{1,k}^{(K)}}\right)  . \label{e8}%
\end{equation}

Based on the above discussion, $H_{0}\ $in (\ref{e4}) is rejected for
$T_{\phi}^{(K)}>c$, where $c$ is a constant to be determined by the null
distribution of $T_{\phi}^{(K)}$. Hence, in order to use $T_{\phi}^{(K)}$ of
(\ref{e6}) for testing hypotheses (\ref{e4}), it is necessary the knowledge of
the distribution of $T_{\phi}^{(K)}$, under $H_{0}$.

There are two important reasons why working directly with test-statistics
$T_{\phi}^{(K)}$, defined in (\ref{e6}), is avoided, on one hand, its
asymptotic distribution $\sup_{t\in(0,1)}\frac{1}{t(1-t)}\left\Vert
\boldsymbol{W}_{0}^{(m)}(t)\right\Vert ^{2}$, is not an easy to handle random
variable (see for instance Theorem 1.2 and 1.3 in Gombay and Horv\'{a}th
(1989)) and on the other hand, in practice cases such that $\kappa
\in\{1,K-1\}$ are very difficult to detect. Let $N(\epsilon)$ be the set all
possible integers $k\in\{1,...,K-1\}$ such that $\frac{k}{K}\in\lbrack
\epsilon,1-\epsilon]$, with $\epsilon>0$, small enough. We shall modify
(\ref{e6}) to be maximized in $N(\epsilon)$, i.e.%
\begin{equation}
^{\epsilon}T_{\phi}^{(K)}=\max_{k\in N(\epsilon)}T_{\phi}^{(K)}(k),
\label{e10}%
\end{equation}
and in the same manner (\ref{e8}) becomes%
\begin{equation}
^{\epsilon}\widehat{\kappa}_{\phi}=\underset{k\in N(\epsilon)}{\arg\max
}T_{\phi}^{(K)}(k)=\underset{k\in N(\epsilon)}{\arg\max}\frac{k(K-k)}%
{K}D_{\phi}^{(k)}\left(  f_{\widehat{\boldsymbol{\theta}}_{0,k}^{(K)}%
},f_{\widehat{\boldsymbol{\theta}}_{1,k}^{(K)}}\right)  . \label{e11}%
\end{equation}

\section{Main result\label{sec2}}

In order to get the asymptotic distribution of the family of tests statistics
$T_{\phi}^{(K)}$, given in (\ref{e6}), we shall assume the usual regularity
assumptions for the multiparameter Central Limit Theorem (see for instance
Theorem 5.2.2. in Sen and Singer (1993)):

\begin{description}
\item[(i)] The parameter space, $\Theta$, is either $%
\mathbb{R}
^{m}$ or a rectangle in $%
\mathbb{R}
^{m}$.

\item[(ii)] For all $\boldsymbol{\theta}\neq\boldsymbol{\theta}^{\prime}%
\in\Theta\subset%
\mathbb{R}
^{m}$,%
\[
\mu\left(  \{\boldsymbol{x}\in\mathcal{X}^{(d)}:f_{\boldsymbol{\theta}%
}(\boldsymbol{x})\neq f_{\boldsymbol{\theta}^{\prime}}(\boldsymbol{x}%
)\}\right)  >0.
\]

\item[(iii)] For $\boldsymbol{\theta}=(\theta_{1},...,\theta_{m})^{T}$,%
\[
\frac{\partial}{\partial\theta_{i}}f_{\boldsymbol{\theta}}(\boldsymbol{x}%
)\quad\text{and}\quad\frac{\partial^{2}}{\partial\theta_{i}\partial\theta_{j}%
}f_{\boldsymbol{\theta}}(\boldsymbol{x})\text{, }i,j\in\{1,...,m\},
\]
exist almost everywhere and are such that%
\[
\left\vert \frac{\partial}{\partial\theta_{i}}f_{\boldsymbol{\theta}%
}(\boldsymbol{x})\right\vert \leq H_{i}(\boldsymbol{x})\quad\text{and}%
\quad\left\vert \frac{\partial^{2}}{\partial\theta_{i}\partial\theta_{j}%
}f_{\boldsymbol{\theta}}(\boldsymbol{x})\right\vert \leq G_{ij}(\boldsymbol{x}%
)\text{, }i,j\in\{1,...,m\},
\]
where%
\[
\int_{\mathcal{X}^{(d)}}H_{i}(\boldsymbol{x})d\mu(\boldsymbol{x})<\infty
\quad\text{and}\quad\int_{\mathcal{X}^{(d)}}G_{ij}(\boldsymbol{x}%
)d\mu(\boldsymbol{x})<\infty\text{, }i,j\in\{1,...,m\}.
\]

\item[(iv)] Denoting $\ell(\boldsymbol{x};\boldsymbol{\theta})=\log
f_{\boldsymbol{\theta}}(\boldsymbol{x})$,%
\[
\frac{\partial}{\partial\theta_{i}}\ell(\boldsymbol{x};\boldsymbol{\theta
})\quad\text{and}\quad\frac{\partial^{2}}{\partial\theta_{i}\partial\theta
_{j}}\ell(\boldsymbol{x};\boldsymbol{\theta})\text{, }i,j\in\{1,...,m\},
\]
exist almost everywhere and are such that the Fisher information matrix is
finite and positive definite. In addition, $\lim_{\delta\rightarrow0}%
\psi(\delta)=0$ where%
\[
\psi(\delta)=E_{\boldsymbol{\theta}}\left[  \sup_{\{\boldsymbol{h}:\left\Vert
\boldsymbol{h}\right\Vert \leq\delta\}}\left\Vert \frac{\partial^{2}}%
{\partial\boldsymbol{\theta}\partial\boldsymbol{\theta}^{T}}\ell
(\boldsymbol{x};\boldsymbol{\theta+h})-\frac{\partial^{2}}{\partial
\boldsymbol{\theta}\partial\boldsymbol{\theta}^{T}}\ell(\boldsymbol{x}%
;\boldsymbol{\theta})\right\Vert \right]  ,
\]
with $\frac{\partial^{2}}{\partial\boldsymbol{\theta}\partial
\boldsymbol{\theta}^{T}}\ell(\boldsymbol{x};\boldsymbol{\theta})=\left(
\frac{\partial^{2}}{\partial\theta_{i}\partial\theta_{j}}\ell(\boldsymbol{x}%
;\boldsymbol{\theta})\right)  _{i,j\in\{1,...,m\}}$ and $\left\Vert
\boldsymbol{\bullet}\right\Vert $ is the Euclidean norm.
\end{description}

\begin{theorem}
Under $H_{0}$ in (\ref{e4}) and the previous regularity assumptions, (i)-(iv),
the asymptotic distribution of (\ref{e10}) is given by%
\begin{equation}
^{\epsilon}T_{\phi}^{(K)}\overset{\mathcal{L}}{\underset{K\rightarrow
\infty}{\longrightarrow}}\mathcal{T}_{m,\epsilon} \label{e9}%
\end{equation}
where $m=\dim(\Theta)$,%
\begin{equation}
\mathcal{T}_{m,\epsilon}=\sup_{t\in\lbrack\epsilon,1-\epsilon]}\frac
{1}{t(1-t)}\left\Vert \boldsymbol{W}_{0}^{(m)}(t)\right\Vert ^{2},
\label{e10b}%
\end{equation}
with $\boldsymbol{W}_{0}^{(m)}(t)=\{(W_{0,1}(t),...,W_{0,m}(t))^{T}%
\}_{t\in\lbrack0,1]}$,\ being an $m$-dimensional vector of independent
Brownian bridges\ and $\left\Vert \boldsymbol{W}_{0}^{(m)}(t)\right\Vert
^{2}=\sum_{i=1}^{m}W_{0,i}^{2}(t)$.
\end{theorem}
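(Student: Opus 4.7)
The plan is to reduce the divergence statistic to a quadratic form in the difference of the two MLEs, then rewrite that difference as a centred partial sum of score contributions, and finally invoke the functional central limit theorem so that the sup functional converges by continuous mapping to the claimed Brownian bridge functional.

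First I would expand $D_{\phi}^{(k)}$ around the common value $\boldsymbol{\theta}_{0}$. Under $H_{0}$, both $\widehat{\boldsymbol{\theta}}_{0,k}^{(K)}$ and $\widehat{\boldsymbol{\theta}}_{1,k}^{(K)}$ are strongly consistent estimators of $\boldsymbol{\theta}_{0}$ whenever $k/K\in[\epsilon,1-\epsilon]$, and the second order Taylor expansion of a $\phi$-divergence around coinciding parameters (Pardo (2006), Ch.\,3) gives, uniformly in $k\in N(\epsilon)$,
\begin{equation*}
\tfrac{2}{\phi''(1)}\,D_{\phi}^{(k)}\bigl(f_{\widehat{\boldsymbol{\theta}}_{0,k}^{(K)}},f_{\widehat{\boldsymbol{\theta}}_{1,k}^{(K)}}\bigr)
=(\widehat{\boldsymbol{\theta}}_{0,k}^{(K)}-\widehat{\boldsymbol{\theta}}_{1,k}^{(K)})^{T}I_{F}(\boldsymbol{\theta}_{0})(\widehat{\boldsymbol{\theta}}_{0,k}^{(K)}-\widehat{\boldsymbol{\theta}}_{1,k}^{(K)})+o_{P}(K^{-1}),
\end{equation*}
where $I_{F}(\boldsymbol{\theta}_{0})$ is the Fisher information matrix. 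The factor $\phi''(1)$ cancels precisely because of the definition of $T_{\phi}^{(K)}(k)$, which is the reason that term was introduced in (\ref{e7}).

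Next, under assumptions (i)--(iv), the usual score expansion of the MLEs yields, again uniformly for $k/K\in[\epsilon,1-\epsilon]$,
\begin{equation*}
\widehat{\boldsymbol{\theta}}_{0,k}^{(K)}-\boldsymbol{\theta}_{0}=\tfrac{1}{k}\,I_{F}(\boldsymbol{\theta}_{0})^{-1}\sum_{i=1}^{k}\boldsymbol{u}(\boldsymbol{X}_{i};\boldsymbol{\theta}_{0})+o_{P}(k^{-1/2}),\qquad
\widehat{\boldsymbol{\theta}}_{1,k}^{(K)}-\boldsymbol{\theta}_{0}=\tfrac{1}{K-k}\,I_{F}(\boldsymbol{\theta}_{0})^{-1}\sum_{i=k+1}^{K}\boldsymbol{u}(\boldsymbol{X}_{i};\boldsymbol{\theta}_{0})+o_{P}((K-k)^{-1/2}),
\end{equation*}
with $\boldsymbol{u}(\boldsymbol{x};\boldsymbol{\theta})=\partial\ell/\partial\boldsymbol{\theta}$. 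A short algebraic manipulation shows
\begin{equation*}
I_{F}(\boldsymbol{\theta}_{0})\bigl(\widehat{\boldsymbol{\theta}}_{0,k}^{(K)}-\widehat{\boldsymbol{\theta}}_{1,k}^{(K)}\bigr)=\frac{K\,\boldsymbol{S}_{k}-k\,\boldsymbol{S}_{K}}{k(K-k)}+o_{P}(K^{-1/2}),\qquad \boldsymbol{S}_{j}=\sum_{i=1}^{j}\boldsymbol{u}(\boldsymbol{X}_{i};\boldsymbol{\theta}_{0}).
\end{equation*}
Substituting into the quadratic form and using the prefactor $k(K-k)/K$ of $T_{\phi}^{(K)}(k)$, with the normalisation $\boldsymbol{Y}_{K}(t)=K^{-1/2}I_{F}(\boldsymbol{\theta}_{0})^{-1/2}\boldsymbol{S}_{\lfloor Kt\rfloor}$ and writing $t=k/K$, one obtains
\begin{equation*}
T_{\phi}^{(K)}(k)=\frac{\bigl\Vert\boldsymbol{Y}_{K}(t)-t\,\boldsymbol{Y}_{K}(1)\bigr\Vert^{2}}{t(1-t)}+o_{P}(1),
\end{equation*}
where the $o_{P}(1)$ remainder is uniform in $k\in N(\epsilon)$ thanks to the boundedness of $1/(t(1-t))$ on $[\epsilon,1-\epsilon]$.

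Finally, the multivariate Donsker theorem applied to the i.i.d.\ mean-zero, identity-covariance vectors $I_{F}(\boldsymbol{\theta}_{0})^{-1/2}\boldsymbol{u}(\boldsymbol{X}_{i};\boldsymbol{\theta}_{0})$ gives $\boldsymbol{Y}_{K}\Rightarrow\boldsymbol{W}^{(m)}$ in the Skorokhod space, whence $\boldsymbol{Y}_{K}(\cdot)-(\cdot)\boldsymbol{Y}_{K}(1)\Rightarrow\boldsymbol{W}_{0}^{(m)}(\cdot)$. Since the map $\boldsymbol{g}\mapsto\sup_{t\in[\epsilon,1-\epsilon]}\Vert\boldsymbol{g}(t)\Vert^{2}/(t(1-t))$ is continuous on $D([0,1],\mathbb{R}^{m})$ equipped with the uniform topology on $[\epsilon,1-\epsilon]$, the continuous mapping theorem yields the assertion (\ref{e9})--(\ref{e10b}).

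The main obstacle is ensuring that both the quadratic approximation of $D_{\phi}^{(k)}$ and the score expansion of the two MLEs hold \emph{uniformly} over $k\in N(\epsilon)$; this is where the truncation by $\epsilon>0$ becomes essential, because for $k$ or $K-k$ too small neither the Taylor expansion of $D_{\phi}$ nor the standard MLE asymptotics would give remainders of the order required to be absorbed into the $o_{P}(1)$ term after multiplication by $1/(t(1-t))$. Assumption (iv), in particular the continuity of $\partial^{2}\ell/\partial\boldsymbol{\theta}\partial\boldsymbol{\theta}^{T}$ in the mean-modulus sense encoded by $\psi(\delta)$, is exactly what permits this uniform control together with a standard chaining argument over $N(\epsilon)$.
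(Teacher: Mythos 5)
Your proposal is correct, and its overall strategy coincides with the paper's: both reduce $T_{\phi}^{(K)}(k)$ to the Wald-type quadratic form $\frac{k(K-k)}{K}(\widehat{\boldsymbol{\theta}}_{0,k}^{(K)}-\widehat{\boldsymbol{\theta}}_{1,k}^{(K)})^{T}\boldsymbol{I}_{\mathcal{F}}(\boldsymbol{\theta}_{0})(\widehat{\boldsymbol{\theta}}_{0,k}^{(K)}-\widehat{\boldsymbol{\theta}}_{1,k}^{(K)})$ up to $o_{P}(1)$ via the second-order expansion of the $\phi$-divergence (the paper simply cites Pardo (2006, p.~443) for this step), and then pass to the Bessel-process limit. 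Where you diverge is in the second step: the paper invokes Theorem 1 of Hawkins (1987) as a black box to get $\max_{k\in N(\epsilon)}Q_{k}^{(K)}\rightarrow\mathcal{T}_{m,\epsilon}$, whereas you unpack that result --- rewriting the difference of the two MLEs as the tied-down partial-sum process $(K\boldsymbol{S}_{k}-k\boldsymbol{S}_{K})/(k(K-k))$ of standardized scores and applying Donsker plus the continuous mapping theorem on $[\epsilon,1-\epsilon]$. Your route is self-contained and makes visible exactly where the regularity conditions and the truncation by $\epsilon$ are consumed (the uniform-in-$k$ control of the MLE remainders and of the Taylor remainder of $D_{\phi}$, which you correctly flag as the delicate point and which is precisely the content of Hawkins' proof); the paper's route is shorter but hides all of that in two citations. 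The only cosmetic difference is that the paper's $Q_{k}^{(K)}$ uses a consistent estimator $\widehat{\boldsymbol{I}_{\mathcal{F}}(\boldsymbol{\theta}_{0})}$ rather than $\boldsymbol{I}_{\mathcal{F}}(\boldsymbol{\theta}_{0})$ itself, which is asymptotically immaterial.
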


\begin{proof}
According to the properties of the MLEs we know that%
\begin{align*}
&  \sqrt{k}\left(  \widehat{\boldsymbol{\theta}}_{0,k}^{(K)}%
-\boldsymbol{\theta}_{0}\right)  \overset{\mathcal{L}}{\underset{k\rightarrow
\infty}{\longrightarrow}}\mathcal{N}\left(  0,\boldsymbol{I}_{\mathcal{F}%
}(\boldsymbol{\theta}_{0})^{-1}\right)  ,\\
&  \sqrt{K-k}\left(  \widehat{\boldsymbol{\theta}}_{1,k}^{(K)}%
-\boldsymbol{\theta}_{1}\right)  \overset{\mathcal{L}%
}{\underset{(K-k)\rightarrow\infty}{\longrightarrow}}\mathcal{N}\left(
0,\boldsymbol{I}_{\mathcal{F}}(\boldsymbol{\theta}_{1})^{-1}\right)  ,
\end{align*}
where for $\boldsymbol{\theta\in}\Theta$, such that $m=\dim\Theta$,
$\boldsymbol{I}_{\mathcal{F}}(\boldsymbol{\theta})=\left(  -E\left[
\frac{\partial^{2}}{\partial\theta_{i}\partial\theta_{i}}\log
f_{\boldsymbol{\theta}}(\boldsymbol{X}_{1})\right]  \right)  _{i,j\in
\{1,...,m\}}$, is the information matrix. If we consider that $\lambda
_{k}^{(K)}=\lim_{K\rightarrow\infty}\frac{k}{K}$, then%
\begin{align*}
&  \sqrt{\frac{k(K-k)}{K}}\left(  \widehat{\boldsymbol{\theta}}_{0,k}%
^{(K)}-\boldsymbol{\theta}_{0}\right)  \overset{\mathcal{L}%
}{\underset{K\rightarrow\infty}{\longrightarrow}}\mathcal{N}\left(
0,(1-\lambda_{k}^{(K)})\boldsymbol{I}_{\mathcal{F}}(\boldsymbol{\theta}%
_{0})^{-1}\right)  ,\\
&  \sqrt{\frac{k(K-k)}{K}}\left(  \widehat{\boldsymbol{\theta}}_{1,k}%
^{(K)}-\boldsymbol{\theta}_{1}\right)  \overset{\mathcal{L}%
}{\underset{K\rightarrow\infty}{\longrightarrow}}\mathcal{N}\left(
0,\lambda_{k}^{(K)}\boldsymbol{I}_{\mathcal{F}}(\boldsymbol{\theta}_{1}%
)^{-1}\right)  .
\end{align*}
This means that under $\mathcal{H}_{0}$, i.e. $\boldsymbol{\theta}%
_{0}=\boldsymbol{\theta}_{1}$,
\[
\sqrt{\frac{k(K-k)}{K}}\left(  \widehat{\boldsymbol{\theta}}_{0,k}%
^{(K)}-\widehat{\boldsymbol{\theta}}_{1,k}^{(K)}\right)  \overset{\mathcal{L}%
}{\underset{K\rightarrow\infty}{\longrightarrow}}\mathcal{N}\left(
0,\boldsymbol{I}_{\mathcal{F}}(\boldsymbol{\theta}_{0})^{-1}\right)  ,
\]
and hence we can construct a Wald-type test-statistic as follows%
\begin{equation}
Q_{k}^{(K)}=\frac{k(K-k)}{K}\left(  \widehat{\boldsymbol{\theta}}_{0,k}%
^{(K)}-\widehat{\boldsymbol{\theta}}_{1,k}^{(K)}\right)  ^{T}%
\widehat{\boldsymbol{I}_{\mathcal{F}}(\boldsymbol{\theta}_{0})}\left(
\widehat{\boldsymbol{\theta}}_{0,k}^{(K)}-\widehat{\boldsymbol{\theta}}%
_{1,k}^{(K)}\right)  , \label{eq}%
\end{equation}
where $\widehat{\boldsymbol{I}_{\mathcal{F}}(\boldsymbol{\theta}_{0})}$ is any
consistent estimator of $\boldsymbol{I}_{\mathcal{F}}(\boldsymbol{\theta}%
_{0})$. From Theorem 1 in Hawkins (1987) we know that%
\[
\max_{k\in N(\epsilon)}Q_{k}^{(K)}\overset{\mathcal{L}}{\underset{K\rightarrow
\infty}{\longrightarrow}}\mathcal{T}_{m,\epsilon}%
\]
In addition from Pardo (2006), page 443, we have%
\[
T_{\phi}^{(K)}(k)=\frac{k(K-k)}{K}\frac{2}{\phi^{\prime\prime}(1)}D_{\phi
}(f_{\widehat{\boldsymbol{\theta}}_{0,k}^{(K)}},f_{\widehat{\boldsymbol{\theta
}}_{1,k}^{(K)}})=Q_{k}^{(K)}+o_{P}(1)
\]
where%
\[
D_{\phi}(f_{\widehat{\boldsymbol{\theta}}_{0,k}^{(K)}}%
,f_{\widehat{\boldsymbol{\theta}}_{1,k}^{(K)}})=\int
f_{\widehat{\boldsymbol{\theta}}_{1,k}^{(K)}}(x)\phi\left(  \frac
{f_{\widehat{\boldsymbol{\theta}}_{0,k}^{(K)}}(x)}%
{f_{\widehat{\boldsymbol{\theta}}_{1,k}^{(K)}}(x)}\right)  dx.
\]
With both results we conclude (\ref{e9}).
\end{proof}

\begin{remark}
If we compare (\ref{eq}) with formula (2.3) in Hawkins (1987), both apparently
are not equivalent because in our case $\frac{k(K-k)}{K}$ appears rather than
$k(K-k)$ of formula (2.3). This difference is associated with the way of
understanding Fisher Information matrix, in fact our Wald test-statistic
coincide with the empirical stochastic process denoted by $\widetilde{Q}%
_{K}(t)$\ at the beginning of Section 3 in Hawkins (1987).
\end{remark}

\begin{remark}
The probability distribution function of random variable $\mathcal{T}%
_{m,\epsilon}$, for $\epsilon>0$, given in (\ref{e10b}), can be found in Sen
(1981, page 397) and De Long (1981). The computation of the probability
distribution function is complex, however it is possible to approximate the
$p$-value of the test in which the distribution of $\mathcal{T}_{m,\epsilon}$
is considered under the null hypothesis. In Estrella (2003), for instance,%
\begin{equation}
\widetilde{p\mathrm{-value}}(x,\epsilon)=\frac{1}{\Gamma\left(  \frac{m}%
{2}\right)  }\left(  \frac{x}{2}\right)  ^{\frac{m}{2}}\exp\left\{  -\frac
{x}{2}\right\}  \left(  \log\left(  \frac{(1-\epsilon)^{2}}{\epsilon^{2}%
}\right)  \left(  1-\frac{m}{x}\right)  +\frac{2}{x}\right)  , \label{eqAprox}%
\end{equation}
with $\Gamma\left(  t\right)  $\ being the Gamma function, is proposed as an
approximation of%
\begin{align}
p\mathrm{-value}(x,\epsilon)  &  =\Pr\left(  \mathcal{T}_{m,\epsilon
}>x\right)  =\Pr\left(  \sup_{s\in\left(  1,(1-\epsilon)^{2}/\epsilon
^{2}\right)  }\frac{1}{\sqrt{s}}\left\Vert \boldsymbol{W}_{0}^{(m)}%
(s)\right\Vert >\sqrt{x}\right) \nonumber\\
&  =\frac{1}{\Gamma\left(  \frac{m}{2}\right)  }\left(  \frac{x}{2}\right)
^{\frac{m}{2}}\exp\left\{  -\frac{x}{2}\right\}  \left(  \log\left(
\frac{(1-\epsilon)^{2}}{\epsilon^{2}}\right)  \left(  1-\frac{m}{x}\right)
+\frac{2}{x}+O\left(  \frac{1}{x^{2}}\right)  \right)  .\nonumber
\end{align}
When calibrating the approximation for the univariate parameter ($m=1$), we
can take into account that the exact quantiles of order $(1-\alpha
)\in\{0.90,0.95,0.99\}$ for $\epsilon=0.05$, are $8.31$, $9.90$ and
$13.45$\ respectively, i.e. $p\mathrm{-value}(8.31,0.05)=0.1$,
$p\mathrm{-value}(9.90,0.05)=0.05$, $p\mathrm{-value}(13.45,0.05)=0.01$. If we
use (\ref{eqAprox}) with $\epsilon=0.05$ and the aforementioned quantiles, we
obtain $\widetilde{p\mathrm{-value}}(8.31,0.05)=9.\,\allowbreak778\,9\times
10^{-2}$, $\widetilde{p\mathrm{-value}}(9.90,0.05)=4.\,\allowbreak
886\,8\times10^{-2}$, $\widetilde{p\mathrm{-value}}%
(13.45,0.05)=9.\,\allowbreak835\,8\times10^{-3}$. We can see that in
particular, $\widetilde{p\mathrm{-value}}(x,0.05)$ approximates very well
$p\mathrm{-value}(x,0.05)$ when $x$ is the quantile of order $1-\alpha=0.99$,
which is in practice of major interest.
\end{remark}

\section{Simulation Study\label{sec3}}

In this section we are going to focus on the change point analysis for a
particular discrete probability model, the binomial model. For this special
case we will give an explicit expression for divergence based test-statistics.
The accuracy will be compared by simulation with respect to pre-existing
test-statistics. In this context, suppose we are dealing with a sequence of
independent r.v.'s $\widetilde{X}_{i}\sim\mathcal{B}\mathrm{in}(n_{i}%
,\theta_{i})$, $i=1,...,K$, for which we are interested in testing (\ref{e1}).
In order to do that we are going to consider a sequence of independent
Bernoulli r.v.'s $X_{ih}\sim\mathcal{B}\mathrm{er}(\theta_{i})$, $i=1,...,K$,
$h=1,...,n_{i}$, whose probability mass function (p.m.f.) is given by
$p_{\theta_{i}}(x)=\theta_{i}^{x}(1-\theta_{i})^{1-x}$,\ $x\in\{0,1\}$, and
$p_{\theta_{i}}(x)=0$,\ $x\notin\{0,1\}$. If we denote the cumulative steps
between consecutive Binomial r.v.'s by
\[
N_{k}=\sum_{i=1}^{k}n_{i}.
\]
the change points are located at $\{1,2,...,N_{K}-1,N_{K}\}$ for $X_{ih}$ and
at $\{N_{k}\}_{k=1}^{K}$\ for $\widetilde{X}_{i}$. Hence, $X_{ih}$ is the only
sequence of r.v.'s which are strictly identically distributed, but the change
points of interest are located in $\{N_{k}\}_{k=1}^{K}\subset\{1,2,...,N_{K}%
-1,N_{K}\}$. This means that we can construct the test-statistic by
considering a sequence of i.i.d. r.v.'s but in addition we restrict the set of
possible change points to $\{N_{k}\}_{k=1}^{K}$, rather than one step change
points. When the change point is located at $N_{k}$, the MLEs of $\theta_{0}$
and $\theta_{1}$\ are given by%
\begin{align*}
\widehat{\theta}_{0,k}^{(K)}  &  =\frac{Y_{k}}{N_{k}},\qquad\widehat{\theta
}_{1,k}^{(K)}=\frac{Y_{K}-Y_{k}}{N_{K}-N_{k}},\\
Y_{k}  &  =\sum_{i=1}^{k}\widetilde{X}_{i}=\sum_{i=1}^{k}\sum_{h=1}^{n_{i}%
}X_{ih}.
\end{align*}

The likelihood ratio test-statistic is given by $S^{(K)}=\max_{k\in
\{1,...,K\}}S_{k}^{(K)}$, where%
\begin{align}
S_{k}^{(K)}  &  =2\left[  N_{k}\left(  \widehat{\theta}_{0,k}^{(K)}\log\left(
\frac{\widehat{\theta}_{0,k}^{(K)}}{\widehat{\theta}_{0,K}^{(K)}}\right)
+(1-\widehat{\theta}_{0,k}^{(K)})\log\left(  \frac{1-\widehat{\theta}%
_{0,k}^{(K)}}{\widehat{\theta}_{0,K}^{(K)}}\right)  \right)  \right.
\nonumber\\
&  \left.  +(N_{K}-N_{k})\left(  \widehat{\theta}_{1,k}^{(K)}\log\left(
\frac{\widehat{\theta}_{1,k}^{(K)}}{\widehat{\theta}_{0,K}^{(K)}}\right)
+(1-\widehat{\theta}_{1,k}^{(K)})\log\left(  \frac{1-\widehat{\theta}%
_{1,k}^{(K)}}{\widehat{\theta}_{0,K}^{(K)}}\right)  \right)  \right]
\label{LRT}%
\end{align}
Two important papers which cover $S^{(K)}$ are Worsley (1983), and Horv\'{a}th
(1989). The expression they gave for $S_{k}^{(K)}$ is not exactly the same,
but it is equivalent to (\ref{LRT}) (see formula (3.22) in Horv\'{a}th and
Serbinowska (1995)). Horv\'{a}th (1989) found that the asymptotic distribution
for a kind of normalization of $S^{(K)}$ based on the Darling-Erd\"{o}s
formula%
\[
G^{(K)}=\sqrt{2\log N_{K}S^{(K)}}-2\log N_{K}-\frac{1}{2}\log\log N_{K}%
+\frac{1}{2}\log\pi,
\]
is asymptotically equal to a Extreme Value random variable with parameters
$\mu=\log2$ and $\beta=1$. In addition, in Theorem 1.2 of Horv\'{a}th and
Serbinowska (1995), a modified version of the likelihood ratio test-statistic
was given, $\widetilde{S}^{(K)}=\max_{k\in\{1,...,K\}}^{(K)}\widetilde{S}%
_{k}^{(K)}$, where%
\[
\widetilde{S}_{k}^{(K)}=\frac{N_{k}(N_{K}-N_{k})}{N_{K}^{2}}S_{k}^{(K)}.
\]
The asymptotic distribution of $\widetilde{S}^{(K)}$ is the supremum in
$(0,1)$ of a standard univariate Brownian bridge (its probability distribution
function is tabulated in Kiefer (1959)). We consider the version of the Wald
test-statistic $^{\epsilon}Q^{(K)}=\max_{k\in N(\epsilon)}Q_{k}^{(K)}$, with%
\[
Q_{k}^{(K)}=\frac{N_{k}(N_{K}-N_{k})}{N_{K}}(\widehat{\theta}_{0,k}%
^{(K)}-\widehat{\theta}_{1,k}^{(K)})^{2}\widehat{\boldsymbol{I}_{\mathcal{F}%
}(\boldsymbol{\theta}_{0})},
\]
where the consistent estimator of $\boldsymbol{I}_{\mathcal{F}}%
(\boldsymbol{\theta}_{0})$ is given by%
\[
\widehat{\boldsymbol{I}_{\mathcal{F}}(\boldsymbol{\theta}_{0})}=\frac{N_{k}%
}{N_{K}}\boldsymbol{I}_{\mathcal{F}}(\widehat{\theta}_{0,k}^{(K)})+\frac
{N_{K}-N_{k}}{N_{K}}\boldsymbol{I}_{\mathcal{F}}(\widehat{\theta}_{1,k}%
^{(K)})=\frac{N_{k}}{N_{K}}\frac{1}{\widehat{\theta}_{0,k}^{(K)}\left(
1-\widehat{\theta}_{0,k}^{(K)}\right)  }+\frac{N_{K}-N_{k}}{N_{K}}\frac
{1}{\widehat{\theta}_{1,k}^{(K)}\left(  1-\widehat{\theta}_{1,k}^{(K)}\right)
}.
\]
Finally, in order to give an explicit expression for divergence based
test-statistics we are going to focus on a family of divergences, power
divergences (see Read and Cressie (1988)), for which $\phi_{\lambda}\left(
x\right)  =\frac{1}{\lambda(1+\lambda)}\left(  x^{\lambda+1}-x-\lambda
(x-1)\right)  $, if $\lambda(1+\lambda)\neq0$ and $\phi_{\lambda}\left(
x\right)  =\lim_{\ell\rightarrow\lambda}\phi_{\ell}\left(  x\right)  $, if
$\lambda(1+\lambda)=0$, that is for each $\lambda\in%
\mathbb{R}
$ we obtain a different divergence measure between the p.m.f.s $p_{\theta_{0}%
}$ and $p_{\theta_{1}}$,%
\[
D_{\lambda}(p_{\theta_{0}},p_{\theta_{1}})=\frac{1}{\lambda(1+\lambda)}\left(
\frac{\theta_{0}^{\lambda+1}}{\theta_{1}^{\lambda}}+\frac{(1-\theta
_{0})^{\lambda+1}}{(1-\theta_{1})^{\lambda}}-1\right)  \text{, if }%
\lambda(1+\lambda)\neq0.
\]
When $\lambda=0$ the power divergence coincides with the so called Kullback
divergence%
\[
D_{0}(p_{\theta_{0}},p_{\theta_{1}})=D_{\mathrm{Kull}}(p_{\theta_{0}%
},p_{\theta_{1}})=\left(  \theta_{0}\log\left(  \frac{\theta_{0}}{\theta_{1}%
}\right)  +(1-\theta_{0})\log\left(  \frac{1-\theta_{0}}{1-\theta_{1}}\right)
\right)  ,
\]
and when $\lambda=-1$ the power divergence coincides with the modified
Kullback divergence $D_{-1}(p_{\theta_{0}},p_{\theta_{1}})=D_{\mathrm{Kull}%
}(p_{\theta_{1}},p_{\theta_{0}})$. Hence, the shape of the power-divergence
based test-statistics is $^{\epsilon}T_{\lambda}^{(K)}=$\linebreak$\max_{k\in
N(\epsilon)}T_{\lambda}(\widehat{\theta}_{0,k}^{(K)},\widehat{\theta}%
_{1,k}^{(K)})$, where%
\begin{equation}
T_{\lambda}(\widehat{\theta}_{0,k}^{(K)},\widehat{\theta}_{1,k}^{(K)}%
)=2\frac{N_{k}(N_{K}-N_{k})}{N_{K}}D_{\lambda}\left(  p_{\widehat{\theta
}_{0,k}^{(K)}},p_{\widehat{\theta}_{1,k}^{(K)}}\right)  ,\nonumber
\end{equation}
that is%
\begin{equation}
T_{\lambda}(\widehat{\theta}_{0,k}^{(K)},\widehat{\theta}_{1,k}^{(K)}%
)=\frac{N_{k}(N_{K}-N_{k})}{N_{K}}\frac{2}{\lambda(1+\lambda)}\left(
\frac{\left(  \widehat{\theta}_{0,k}^{(K)}\right)  ^{\lambda+1}}{\left(
\widehat{\theta}_{1,k}^{(K)}\right)  ^{\lambda}}+\frac{\left(
1-\widehat{\theta}_{0,k}^{(K)}\right)  ^{\lambda+1}}{\left(  1-\widehat{\theta
}_{1,k}^{(K)}\right)  ^{\lambda}}-1\right)  \text{, for }\lambda
(1+\lambda)\neq0, \label{phid}%
\end{equation}
and%
\begin{equation}
T_{0}(\widehat{\theta}_{0,k}^{(K)},\widehat{\theta}_{1,k}^{(K)})=2\frac
{N_{k}(N_{K}-N_{k})}{N_{K}}\left(  \widehat{\theta}_{0,k}^{(K)}\log\left(
\frac{\widehat{\theta}_{0,k}^{(K)}}{\widehat{\theta}_{1,k}^{(K)}}\right)
+(1-\widehat{\theta}_{0,k}^{(K)})\log\left(  \frac{1-\widehat{\theta}%
_{0,k}^{(K)}}{1-\widehat{\theta}_{1,k}^{(K)}}\right)  \right)  . \label{Kull}%
\end{equation}
Assuming that there is a monotone, continuous function $g$ such that $g(0)=0$
and%
\[
\lim_{K\rightarrow\infty}\max_{k\in N(\epsilon)}\left\vert \frac{N_{k}%
(N_{K}-N_{k})}{N_{K}}-g\left(  \frac{k(K-k)}{K}\right)  \right\vert =0,
\]
the asymptotic distribution of $^{\epsilon}Q^{(K)}$ and $^{\epsilon}%
T_{\lambda}^{(K)}$, for all $\lambda\in%
\mathbb{R}
$, is the supremum in $[\epsilon,1-\epsilon]$ of the univariate tied-down
Bessel process, i.e. (\ref{e10b}) with $m=1$. This assumption is very similar
to the assumption given in Horv\'{a}th and Serbinowska (1995) for the
asymptotic distribution of $\widetilde{S}^{(K)}$.%

\begin{table}[htbp] \centering
\caption{Exact simulated sizes.\label{Table1}}\smallskip\
\begin{tabular}
[c]{|llccccccc|}\hline
&  & \multicolumn{2}{c}{$K=64$} & \multicolumn{2}{c}{$K=300$} &
\multicolumn{2}{c}{$K=500$} & $K=\infty$\\\hline
& $1-\alpha$ & $x_{1-\alpha}$ & $\widehat{\alpha}$ & $x_{1-\alpha}$ &
$\widehat{\alpha}$ & $x_{1-\alpha}$ & $\widehat{\alpha}$ & $x_{1-\alpha}%
$\\\hline
& \multicolumn{1}{|l}{$0.90$} & \multicolumn{1}{|c}{$1.302$} & $0.0664$ &
$1.386$ & $0.0786$ & $1.420$ & $0.0860$ & $1.498$\\
$\widetilde{S}^{(K)}$ & \multicolumn{1}{|l}{$0.95$} &
\multicolumn{1}{|c}{$1.619$} & $0.0318$ & $1.710$ & $0.0372$ & $1.740$ &
$0.0400$ & $1.844$\\
& \multicolumn{1}{|l}{$0.99$} & \multicolumn{1}{|c}{$2.595$} & $0.0094$ &
$2.484$ & $0.0072$ & $2.531$ & $0.0074$ & $2.649$\\\hline
& \multicolumn{1}{|l}{$0.90$} & \multicolumn{1}{|c}{$1.707$} & $0.0208$ &
$1.939$ & $0.0260$ & $2.011$ & $0.0288$ & $2.943$\\
$G^{(K)}$ & \multicolumn{1}{|l}{$0.95$} & \multicolumn{1}{|c}{$2.277$} &
$0.0076$ & $2.431$ & $0.0094$ & $2.555$ & $0.0118$ & $3.663$\\
& \multicolumn{1}{|l}{$0.99$} & \multicolumn{1}{|c}{$3.394$} & $0.0002$ &
$3.653$ & $0.0002$ & $3.796$ & $0.0000$ & $5.293$\\\hline
& \multicolumn{1}{|l}{$0.90$} & \multicolumn{1}{|c}{$7.351$} & $0.0654$ &
$7.881$ & $0.0834$ & $7.801$ & $0.0824$ & $8.31$\\
$^{0.05}T_{0}^{(K)}$ & \multicolumn{1}{|l}{$0.95$} &
\multicolumn{1}{|c}{$8.968$} & $0.0340$ & $9.458$ & $0.0412$ & $9.514$ &
$0.0430$ & $9.90$\\
& \multicolumn{1}{|l}{$0.99$} & \multicolumn{1}{|c}{$12.730$} & $0.0086$ &
$13.143$ & $0.0094$ & $12.981$ & $0.0078$ & $13.45$\\\hline
& \multicolumn{1}{|l}{$0.90$} & \multicolumn{1}{|c}{$7.374$} & $0.0664$ &
$7.884$ & $0.0832$ & $7.809$ & $0.0828$ & $8.31$\\
$^{0.05}T_{1}^{(K)}$ & \multicolumn{1}{|l}{$0.95$} &
\multicolumn{1}{|c}{$9.007$} & $0.0352$ & $9.464$ & $0.0412$ & $9.509$ &
$0.0432$ & $9.90$\\
& \multicolumn{1}{|l}{$0.99$} & \multicolumn{1}{|c}{$12.851$} & $0.0088$ &
$13.128$ & $0.0094$ & $12.981$ & $0.0078$ & $13.45$\\\hline
& \multicolumn{1}{|l}{$0.90$} & \multicolumn{1}{|c}{$7.432$} & $0.0688$ &
$7.911$ & $0.0840$ & $7.818$ & $0.0834$ & $8.31$\\
$^{0.05}T_{2}^{(K)}$ & \multicolumn{1}{|l}{$0.95$} &
\multicolumn{1}{|c}{$9.141$} & $0.0370$ & $9.495$ & $0.0416$ & $9.519$ &
$0.0434$ & $9.90$\\
& \multicolumn{1}{|l}{$0.99$} & \multicolumn{1}{|c}{$13.061$} & $0.0094$ &
$13.129$ & $0.0094$ & $13.015$ & $0.0084$ & $13.45$\\\hline
& \multicolumn{1}{|l}{$0.90$} & \multicolumn{1}{|c}{$7.311$} & $0.0642$ &
$7.871$ & $0.0822$ & $7.800$ & $0.0824$ & $8.31$\\
$^{0.05}Q^{(K)}$ & \multicolumn{1}{|l}{$0.95$} & \multicolumn{1}{|c}{$8.934$}
& $0.0334$ & $9.441$ & $0.0408$ & $9.508$ & $0.0430$ & $9.90$\\
& \multicolumn{1}{|l}{$0.99$} & \multicolumn{1}{|c}{$12.742$} & $0.0084$ &
$13.135$ & $0.0094$ & $12.973$ & $0.0078$ & $13.45$\\\hline
\end{tabular}%
\end{table}%

A simulation study is performed in order to compare the accuracy of the
proposed power divergence type test with respect to pre-existing
test-statistics. In this context we apply test-statistics $\widetilde{S}%
^{(K)}$, $G^{(K)}$, $^{0.05}T_{0}^{(K)}$, $^{0.05}T_{1}^{(K)}$, $^{0.05}%
T_{2}^{(K)}$, $^{0.05}Q^{(K)}$ with $5000$ replication. The design is
essentially the same as the study performed in Horv\'{a}th and Serbinowska
(1995): $\theta_{0}=0.5$; three possible values of $K$ and nominal sizes
$\alpha$\ are considered; apart from the quantiles of order $1-\alpha$,
$x_{1-\alpha}$, the exact sizes $\widehat{\alpha}$ are calculated. With
$K=\infty$, it is understood that $x_{1-\alpha}$ is the asymptotic quantile
associated to the corresponding test-statistic. Taking into account that the
maximization for obtaining $^{\epsilon}T_{1}^{(K)}$, $^{\epsilon}T_{2}^{(K)}$,
$^{\epsilon}Q^{(K)}$, with $\epsilon=0.05$ is over all possible integers $k\in
N(\epsilon)$, we removed $k\in\{1,...,K-1\}$ when $k<\epsilon K$ or
$k>(1-\epsilon)K$.

Looking at the results given in Table \ref{Table1}, the worst approximation of
$\alpha$ is obtained with $G^{(K)}$. The Wald test-statistic $^{0.05}Q^{(K)}$
is a good competitor for the test-statistic introduced in Horv\'{a}th and
Serbinowska (1995), $\widetilde{S}^{(K)}$. All the exact sizes underestimate
the nominal size, which means that the best approximation is obtained with the
greatest value of $\widehat{\alpha}$, hardly ever obtained with the
power-divergence based test-statistic with $\lambda=2$, $^{0.05}T_{2}^{(K)}%
$.\newpage

\section{Numerical Example: Lindisfarne Scribes problem\label{sec4}}

The Lindisfarne Gospels are presumed to be the work of a monk named Eadfrith,
who became Bishop of Lindisfarne in year 698. In the 10th century an Old
English translation of the Gospels was made for one or more scribes. Several
statisticians have been devoted to studying the problem of the number of
scribes who participated in the translation of the Gospels. Such a problem is
known as the \textquotedblleft Lindisfarne Scribes problem\textquotedblright.

In the framework of the model that is followed in the simulation study, the
Lindisfarne Gospels are considered to be divided into $K=64$ consecutive
sections (see Ross (1950) for more details). It is supposed that each section
could have been translated by one scribe and the same scribe is associated
only with consecutive sections. Since the present indicative in Old English
verbs admitted several variants in its spelling, the custom of using these
variants can be used as a key factor useful to identifying different
translators. Based on the data given in Table \ref{Table2}, it is counted
$n_{i}$ as the total of observed frequencies that the third singular or second
plural appears in each section $i=1,...,64$, and the observation $x_{i}$
(coming from r.v. $X_{i}$) represents how many times ending $-s$ appear in
these verbs. Note that either the third singular or second plural admit two
endings, $-s$ and $-\delta$, and hence if we want to know how many times
ending $-\delta$ appear in these verbs, the observations are obtained as
$n_{i}-x_{i}$, $i=1,...,K$. It is assumed that the custom of using both
endings for each scribe is different and for this reason our interest is to
find the consecutive changes in the probability structure of both endings.%

\begin{table}[htbp] \centering
\caption{Data of the Lindisfarne's problem\label{Table2}}\smallskip\
\begin{tabular}
[c]{|lrrrrrrrrrrrrrrrr|}\hline
\multicolumn{1}{|l|}{$i$} & 1 & 2 & 3 & 4 & 5 & 6 & 7 & 8 & 9 & 10 & 11 & 12 &
13 & 14 & 15 & 16\\
\multicolumn{1}{|l|}{$x_{i}$} & 12 & 29 & 31 & 21 & 14 & 41 & 49 & 30 & 39 &
35 & 26 & 32 & 30 & 17 & 19 & 33\\
\multicolumn{1}{|l|}{$n_{i}$} & 21 & 39 & 44 & 25 & 19 & 66 & 62 & 34 & 47 &
47 & 29 & 33 & 38 & 21 & 21 & 36\\\hline
\multicolumn{1}{|l|}{$i$} & 17 & 18 & 19 & 20 & 21 & 22 & 23 & 24 & 25 & 26 &
27 & 28 & 29 & 30 & 31 & 32\\
\multicolumn{1}{|l|}{$x_{i}$} & 36 & 28 & 10 & 2 & 8 & 12 & 5 & 3 & 14 & 13 &
21 & 19 & 29 & 16 & 16 & 5\\
\multicolumn{1}{|l|}{$n_{i}$} & 40 & 33 & 25 & 5 & 23 & 28 & 20 & 28 & 20 &
23 & 41 & 32 & 39 & 28 & 21 & 24\\\hline
\multicolumn{1}{|l|}{$i$} & 33 & 34 & 35 & 36 & 37 & 38 & 39 & 40 & 41 & 42 &
43 & 44 & 45 & 46 & 47 & 48\\
\multicolumn{1}{|l|}{$x_{i}$} & 3 & 1 & 6 & 1 & 10 & 5 & 2 & 10 & 5 & 14 & 8 &
10 & 9 & 13 & 6 & 8\\
\multicolumn{1}{|l|}{$n_{i}$} & 30 & 15 & 23 & 5 & 35 & 30 & 14 & 56 & 51 &
62 & 45 & 55 & 42 & 27 & 36 & 31\\\hline
\multicolumn{1}{|l|}{$i$} & 49 & 50 & 51 & 52 & 53 & 54 & 55 & 56 & 57 & 58 &
59 & 60 & 61 & 62 & 63 & 64\\
\multicolumn{1}{|l|}{$x_{i}$} & 2 & 11 & 8 & 3 & 19 & 17 & 12 & 15 & 15 & 12 &
21 & 40 & 30 & 4 & 3 & 6\\
\multicolumn{1}{|l|}{$n_{i}$} & 9 & 26 & 38 & 29 & 55 & 37 & 45 & 47 & 44 &
45 & 33 & 65 & 85 & 13 & 9 & 16\\\hline
\end{tabular}%
\end{table}%

Since the proposed test-statistics are valid for single change-point
detection, now we are going to describe the algorithm based on the binary
segmentation procedure. In order to make a sequence of hypothesis testing, it
is convenient to use $\alpha=0.01$ if we want to get a not very large upper
bound for the global significance level according to the Bonferroni's
inequality. Suppose that the power-divergence based test-statistics with
$\lambda=2$, $\epsilon=0.05$, $^{0.05}T_{2}^{(K)}$, is our focus of interest.
The algorithm based on the binary segmentation procedure (Vostrikova (1981))
is described in Figures \ref{Segment1}-\ref{Segment2}. We consider
$N(\epsilon)=\{3,...,61\}$ as change point candidates in Step 1, \ i.e. we
have initially taken $\{1,...,K-1\}$\ but we have removed all candidates $k$
such that $k<K\epsilon$ or $k>K\epsilon$. Once the values of $T_{2}%
(\widehat{\theta}_{0,k}^{(K)},\widehat{\theta}_{1,k}^{(K)})$ are obtained for
each candidate belonging to $k\in N(\epsilon)$, we select its maximum
argument, $k=31$, which is accepted as change-point because the $p$-value is
less than $0.1$. The $p$-values are calculated by following (\ref{eqAprox}).
From now we have to investigate how to divide $[1,31]$ into segments (Step 2)
and $[32,64]$. We will continue until all candidates have $p$-values greater
than $0.1$. After $12$ steps it is concluded that the Lindisfarne Gospels
could have been written by seven scribes because the obtained segments are
$[1,10]$, $[11,18]$, $[19,23]$, $\{24\}$, $[25,31]$, $[32,52]$, $[53,64]$.
This conclusion differs a little bit from the conclusion obtained in
Horv\'{a}th and Serbinowska (1995), because the number of scribes they
proposed was one less and the locations of the change points are not exactly
the same.%

\begin{figure}[tbp] \centering
\begin{tabular}
[c]{ll}%
\raisebox{-0cm}{\fbox{\includegraphics[
height=5.5201cm,
width=8.2813cm
]%
{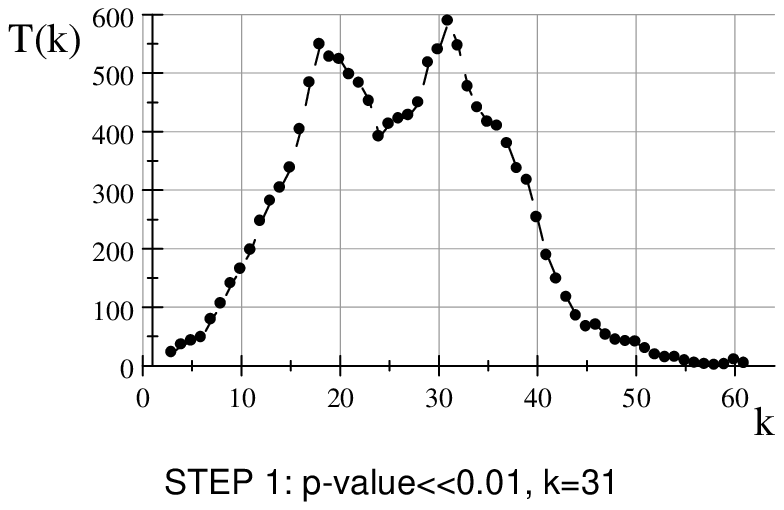}%
}}
\smallskip\negthinspace & \negthinspace%
\raisebox{-0cm}{\fbox{\includegraphics[
height=5.5201cm,
width=8.2813cm
]%
{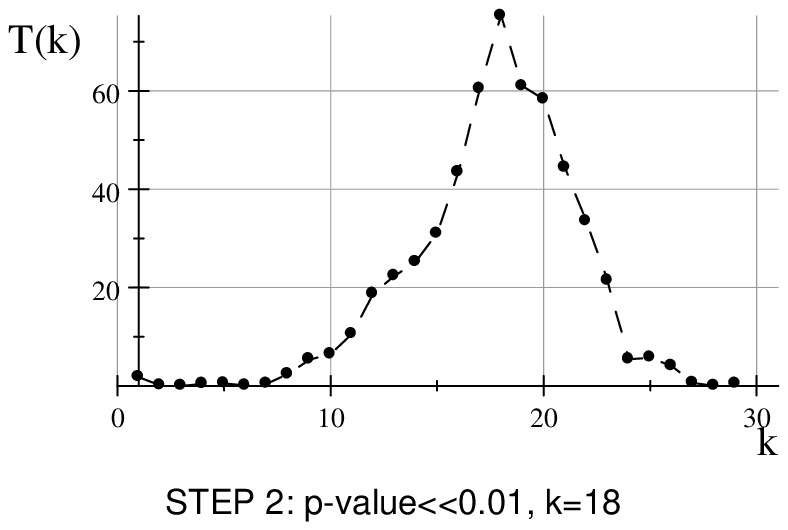}%
}}
\smallskip\\%
\raisebox{-0cm}{\fbox{\includegraphics[
height=5.5201cm,
width=8.2813cm
]%
{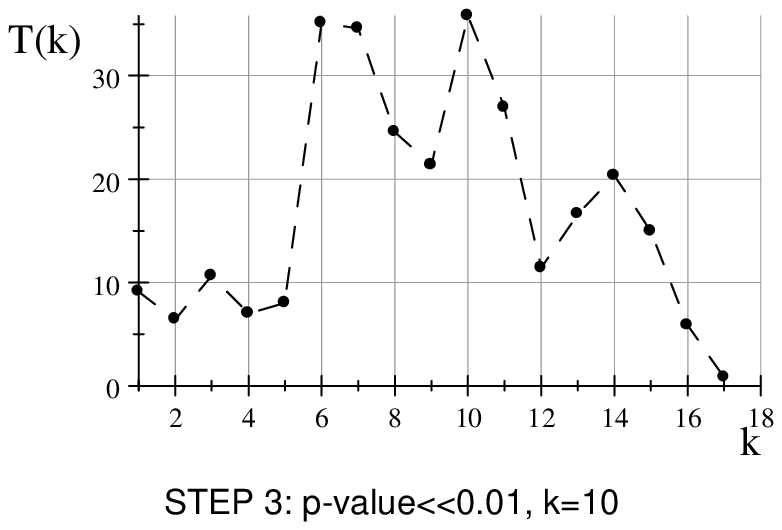}%
}}
\smallskip\negthinspace & \negthinspace%
\raisebox{-0cm}{\fbox{\includegraphics[
height=5.5201cm,
width=8.2813cm
]%
{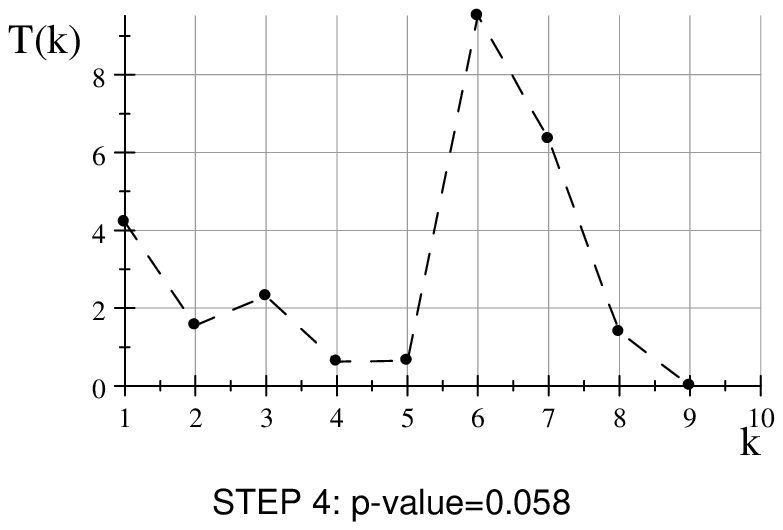}%
}}
\smallskip\\%
\raisebox{-0cm}{\fbox{\includegraphics[
height=5.5201cm,
width=8.2813cm
]%
{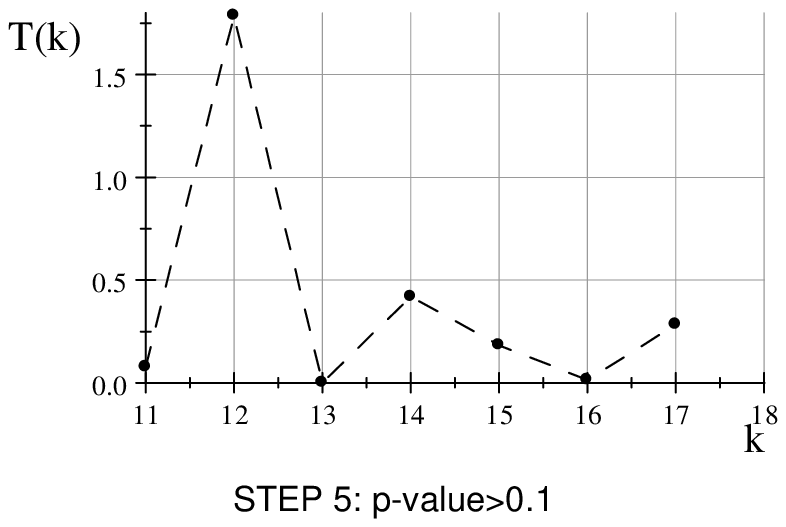}%
}}
\smallskip\negthinspace & \negthinspace%
\raisebox{-0cm}{\fbox{\includegraphics[
height=5.5201cm,
width=8.2813cm
]%
{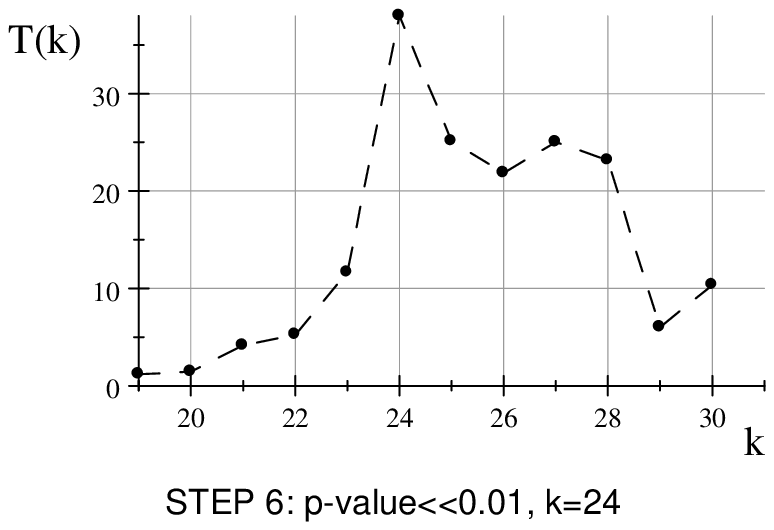}%
}}
\smallskip
\end{tabular}
\caption{Binary segmentation procedure for the Lindisfarne's problem (part
I)\label{Segment1}}
\end{figure}%
%

\begin{figure}[tbp] \centering
\begin{tabular}
[c]{ll}%
\raisebox{-0cm}{\fbox{\includegraphics[
height=5.5201cm,
width=8.2813cm
]%
{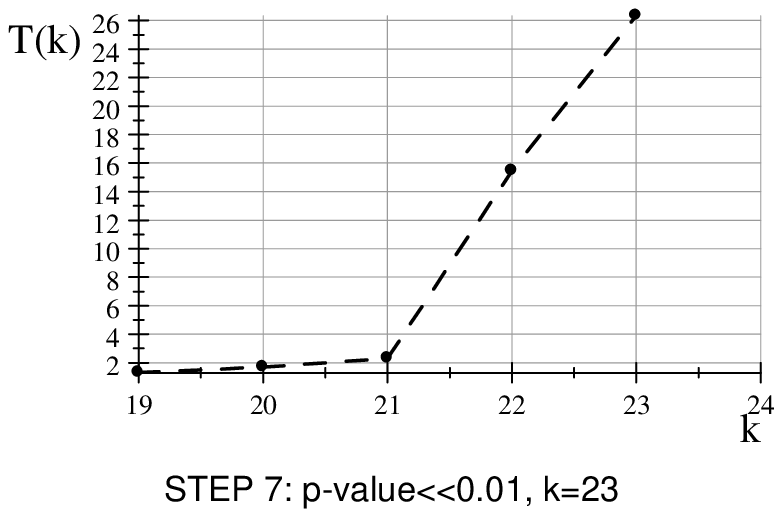}%
}}
\smallskip\negthinspace & \negthinspace%
\raisebox{-0cm}{\fbox{\includegraphics[
height=5.5201cm,
width=8.2813cm
]%
{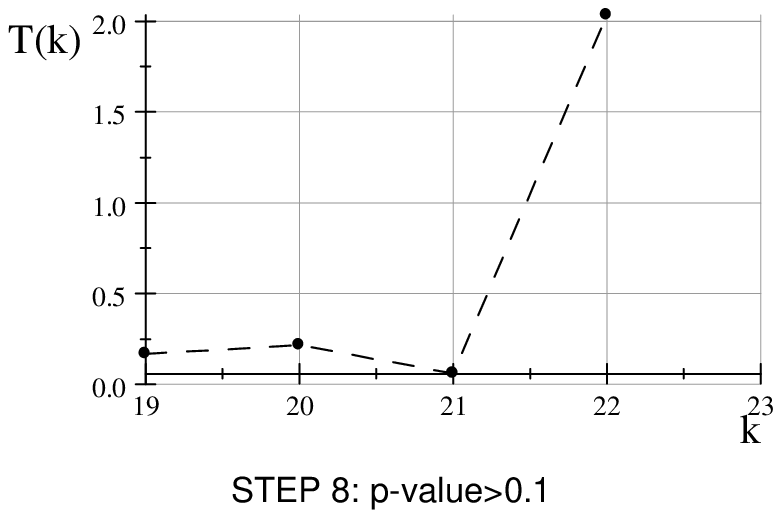}%
}}
\smallskip\\%
\raisebox{-0cm}{\fbox{\includegraphics[
height=5.5201cm,
width=8.2813cm
]%
{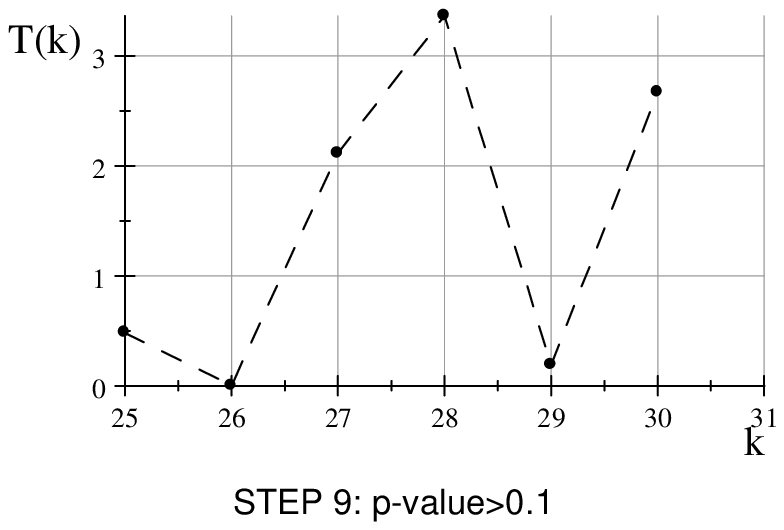}%
}}
\smallskip\negthinspace & \negthinspace%
\raisebox{-0cm}{\fbox{\includegraphics[
height=5.5201cm,
width=8.2813cm
]%
{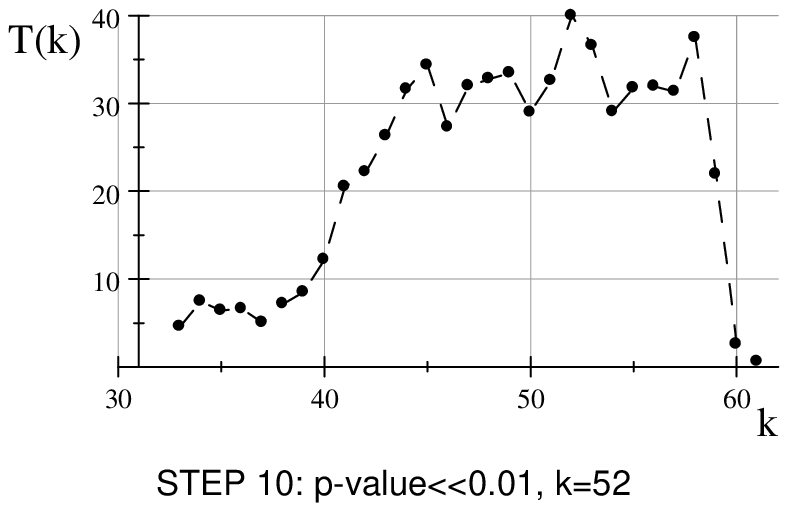}%
}}
\smallskip\\%
\raisebox{-0cm}{\fbox{\includegraphics[
height=5.5201cm,
width=8.2813cm
]%
{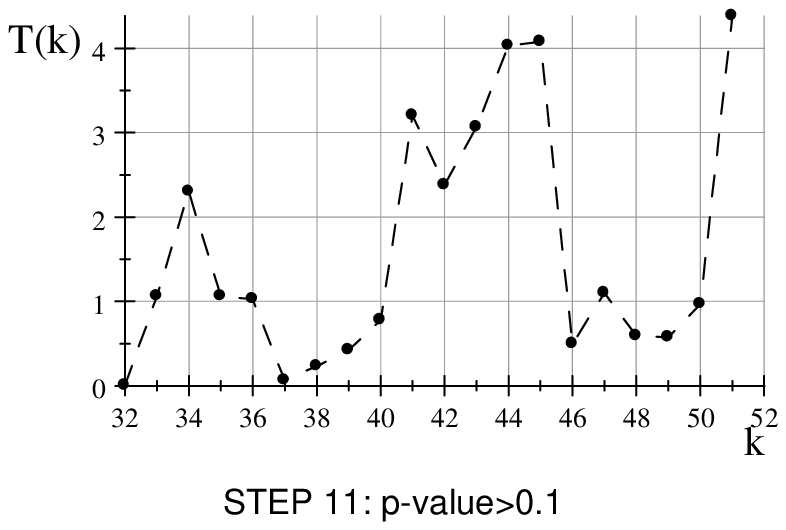}%
}}
\smallskip\negthinspace & \negthinspace%
\raisebox{-0cm}{\fbox{\includegraphics[
height=5.5201cm,
width=8.2813cm
]%
{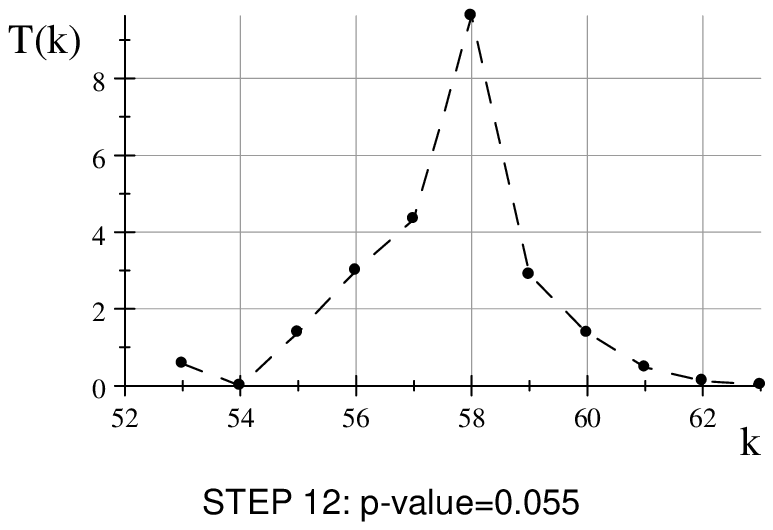}%
}}
\smallskip
\end{tabular}
\caption{Binary segmentation procedure for the Lindisfarne's problem (part
II)\label{Segment2}}
\end{figure}%

\newpage


\begin{thebibliography}{99}                                                                                               %


\bibitem {chengupta}Chen, J. and Gupta, A. K. (1995). Likelihood procedure for
testing change points hypothesis for multivariate Gaussian model.
\textit{Random Operators and Stochastic Equations} \textbf{3}, 235--244.

\bibitem {chenGupta2}Chen, J. and Gupta, A. K. (2000). \textit{Parametric
statistical change point analysis}. Birkhauser Boston, Inc., Boston, MA.

\bibitem {chenGupta3}Chen, J. and Gupta, A. K. (2001). On change point
detection and estimation. \textit{Comm. Statist. Simulation Comput}.
\textbf{30}, 665--697.

\bibitem {chenGupta4}Chen, J. and Gupta, A. K. (2004). Statistical inference
on covariance change points in Gaussian model. \emph{Statistics}\textit{,}
\textbf{38}, 17--28.

\bibitem {CsorgoHorvath}Cs\"{o}rg\"{o}, M. and Horv\'{a}th, L. (1998).
\emph{Limit Theorems in Change-Point Analysis}. Wiley, New York.

\bibitem {delong}De Long, D.M. (1981). Crossing probabilities for a square
root boundary by a Bessel process . Communications in Statistics--Theory and
Methods, \textbf{10}, 2197--2213.

\bibitem {Estrella}Estrella , A. (2003): Critical Values And P Values Of
Bessel Process Distributions: Computation And Application To Structural Break
Tests. \emph{Econometric Theory}, \textbf{19}, 2003, 1128--1143.

\bibitem {Gombay}Gombay, E. and Horv\'{a}th, L. (1996). On the Rate of
Approximations for Maximum Likelihood Tests in Change-point Models.
\emph{Journal of Multivariate Analysis}, \textbf{56}, 120--152.

\bibitem {Howkins}Hawkins, D. L. (1987). A Test for a Change Point in a
Parametric Model Based on a Maximal Wald-Type Statistic. \emph{Sankhy\={a}:
The Indian Journal of Statistics, Series A}, \textbf{49}, 368-376.

\bibitem {Horvath}Horv\'{a}th, L. (1989). The limit distributions of the
likelihood ratio and cumulative sum tests for a change in binomial
probability. \emph{Journal of Multivariate Analysis}, \textbf{31}, 148--159.

\bibitem {HorvathSerb}Horv\'{a}th, L. and Serbinowska, M. (1995). Testing for
Changes in Multinomial Observations: the Lindisfarne Scribes problem.
\emph{Scandinavian Journal of Statistics}, \textbf{22}, 371--384.

\bibitem {Kiefer}Kiefer (1959). K-Sample Analogues of the Kolmogorov-Smirnov
and Cramer-V. Mises Tests. Annals of Mathematical Statististics, \textbf{30}, 420--447.

\bibitem {Pardo}Pardo, L. (2006). \emph{Statistical inference based on
divergence measures}. Chapman \& Hall/CRC, Boca Raton.

\bibitem {ReadCressie}Read, T. and Cressie, N. (1988). \emph{Goodness-of-Fit
Statistics for Discrete Multivariate Data}. Springer, New York.

\bibitem {Ross}Ross, A.S.C. (1950). Philological probability problems.
\emph{Journal of the Royal Statistical Society -- Series B}, \textbf{12}, 19--59.

\bibitem {Sen}Sen, A. K. (1981). Sequential Nonparametrics: Invariance
Principles and Statistical Inference. Wiley, New York

\bibitem {SenSrivastava}Sen, A. K. and Srivastava, M. S. (1980). On tests for
detecting change in the multivariate mean. Tech. Report No. 3, University of Toronto.

\bibitem {SenSinger}Sen, A. K. and Singer, J. M. (2003). Large Sample Methods
in Statistics: An Introduction with Applications. Chapman \& Hall, New York.

\bibitem {SrivastavaWorlsley}Srivastava, M. S. and Worsley, K. J. (1986).
Likelihood ratio tests for a change in the multivariate normal mean.
\emph{Journal of Americal Statistical Association}, \textbf{81}, 199--204.

\bibitem {VostricovaJu}Vostrikova, L. Ju. (1981). Detecting disorder in
multidimensional random processes. \emph{Soviet Math Dokl}. \textbf{24}, 55--59.

\bibitem {Worlsley}Worlsley, K. J. (1983). The power of likelihood ratio and
cumulative sum tests for a change in a binomial probability. \emph{Biometrika}%
, \textbf{70}, 455--464.
\end{thebibliography}
\end{document}